\theoremstyle{plain}
\newtheorem{theorem}{Theorem}[section]
\newtheorem{prop}[theorem]{Proposition}
\theoremstyle{definition}
\newtheorem{ex}[theorem]{Example}
\begin{document}

\title{Cohomological Property of Vector Bundles on Biprojective
Spaces}
\author{Francesco Malaspina and Chikashi Miyazaki
\vspace{4pt}\\
{\small Politecnico di Torino}\\
{\small\it Corso Duca degli Abruzzi 24, 10129 Torino, Italy}\\
{\small\it e-mail: francesco.malaspina@polito.it}\\
\vspace{6pt}\\
{\small  Department of Mathematics,
Faculty of Education, Kumamoto University}\\
{\small\it  Kurokami 2-40-1, Chuo-ku,
Kumamoto 860-8555, Japan}\\
{\small\it e-mail: cmiyazak@educ.kumamoto-u.ac.jp}}
\date{}

\maketitle

\def\thefootnote{}
\footnote{The first  author is member of the GNSAGA group of INdAM. The second author was partially supported by
INdAM(Italy) and JSPS Kakenhi(C) (26400048). \par
{\it Mathematics Subject Classification}. 14J60.  \par
{\it Keywords and Phrases}.
Horrocks criterion, Segre product, Buchsbaum vector bundle}

\begin{abstract}
This paper investigates the cohomological property
of vector bundles on biprojective space. We will give
a criterion for a vector bundle to be isomorphic to
the tensor product of pullbacks of exterior products of
differential sheaves.
\end{abstract}

\section{Introduction}

The purpose of this paper is to study the cohomological
property of vector bundle towards
Horrocks-type criteria and to characterize the tensor product of
pullbacks of exterior products of
differential sheaves on biprojective space.
Horrocks Theorem says that an ACM vector bundle on the
projective space is isomorphic to a direct sum of
line bundles. There are some attempts to generalize
to the biprojective space, that is,
some splitting criteria for a vector bundle on
${\Bbb P}^{m} \times {\Bbb P}^{n}$ to be isomorphic
to a direct sum of the form
${\cal O}_{{\Bbb P}^{m} \times {\Bbb P}^{n}}(\ell_1, \ell_2)$
in \cite{BM,CM,M1}. In particular in \cite{CM} it is used a
Beilinson type spectral sequence  and $m$-blocks collection
while in \cite{BM, M1} it is
used a notion of Castelnuovo-Mumford regularity and Koszul complexes (for a similar approach on Grassmannians see \cite{AM}).
In this paper we will give a cohomological criterion
for a vector bundle on ${\Bbb P}^{m} \times {\Bbb P}^{n}$
to have a direct summand of the form
${\Omega}^p_{{\Bbb P}^{m}}
\boxtimes {\Omega}^q_{{\Bbb P}^{n}}$ using the second approach.

Let us describe our perspective on the
condition for a vector bundle to have a specific
direct summand. We will begin a proof of the Horrocks
theorem through the
Castelnuovo-Mumford regularity according to
\cite{BM}. Let $E$ be an ACM vector bundle on ${\Bbb P}^n$.
Assume that $E$ is $m$-regular but not $(m-1)$-regular,
see \cite{Mum} for the definition and basic properties
for $m$-regular.
Then we have a surjective map
${\displaystyle \varphi :
{\cal O}_{{\Bbb P}^n}^{\oplus} \to E(m)}$.
Since $E$ is ACM, we have ${\rm H}^n(E(m-1-n)) \ne 0$,
and ${\rm H}^0(E^{\vee}(-m)) \ne 0$ by Serre duality.
Thus we have a nonzero map ${\displaystyle
\psi : E(m) \to {\cal O}_{{\Bbb P}^n}}$.
Since $\psi \circ \varphi$ is nonzero,
it splits. Hence ${\cal O}_{{\Bbb P}^n}$ is
a direct summand of $E(m)$.

Now we will proceed the next step on studying
a Buchsbaum vector bundle on ${\Bbb P}^n$,
that is, ${\frak m}{\rm H}^i_*(E|_L) = 0$
for any $r$-plane $L$ of ${\Bbb P}^n$, $1 \le i < r \le n$,
where ${\frak m} = \oplus_{\ell \ge 1}\Gamma({\cal O}_{{\Bbb P}^n}(\ell))$.
Instead of using the regularity we will make use of
the Koszul complex.
Before giving a sufficient
condition for a vector bundle to be a direct sum of vector bundles
of the form  $\Omega^i_{{\Bbb P}^n}(\ell)$,
we will describe important facts concerning the structure
of Buchsbaum vector bundle.

\begin{prop}[\cite{C,G}]
\label{Goto-Chang}
Let $E$  be a Buchsbaum vector bundle on ${\Bbb P}^n$.
Then $E$ is isomorphic to a direct sum of vector bundles
of the form $\Omega^i_{{\Bbb P}^n}(\ell)$.
\end{prop}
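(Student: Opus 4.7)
The plan is to mimic the Horrocks proof sketched above, replacing Castelnuovo--Mumford regularity by Koszul-type exact sequences and invoking the Buchsbaum hypothesis to kill the relevant multiplication maps on cohomology. The idea is to pick a nonzero intermediate cohomology class of $E$, lift it to a nonzero map from some $\Omega^i_{{\Bbb P}^n}(\ell)$ into $E$, and split this off using the parallel construction applied to $E^{\vee}$.

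More precisely, if $H^i_*(E)=0$ for all $1\le i\le n-1$, then $E$ is ACM and Horrocks splits it into line bundles ${\cal O}_{{\Bbb P}^n}(\ell)=\Omega^0_{{\Bbb P}^n}(\ell)$. Otherwise, fix $i$ with $1\le i\le n-1$ and $\ell_0$ with a nonzero class $\xi\in H^i(E(\ell_0))$. Iterating the exterior powers of the Euler sequence
\[
0\to \Omega^{j}_{{\Bbb P}^n}\to \wedge^{j}V\otimes {\cal O}_{{\Bbb P}^n}(-j)\to \Omega^{j-1}_{{\Bbb P}^n}\to 0,\qquad V=H^0({\cal O}_{{\Bbb P}^n}(1)),
\]
and applying ${\rm Hom}(-,E(a))$ for a suitable $a$, each connecting map in the resulting long exact sequences factors through a multiplication $V\otimes H^{q}_{*}(E)\to H^{q}_{*}(E(1))$ with $1\le q\le n-1$. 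The Buchsbaum hypothesis ${\frak m}H^{q}_{*}(E)=0$ kills every such multiplication, so the composed edge map
\[
{\rm Hom}(\Omega^{i}_{{\Bbb P}^n}(-\ell_0-i),E)\longrightarrow H^{i}(E(\ell_0))
\]
is surjective, lifting $\xi$ to a nonzero homomorphism $\varphi\colon \Omega^{i}_{{\Bbb P}^n}(-\ell_0-i)\to E$.

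The dual vector bundle $E^{\vee}$ is again Buchsbaum, and Serre duality produces a class in $H^{n-i}(E^{\vee}(-\ell_0-n-1))$ paired non-degenerately with $\xi$. Running the parallel construction on $E^{\vee}$ and dualizing yields a nonzero homomorphism $\psi\colon E\to \Omega^{i}_{{\Bbb P}^n}(-\ell_0-i)$. Since $\Omega^{i}_{{\Bbb P}^n}$ is simple with ${\rm End}(\Omega^{i}_{{\Bbb P}^n})=k$, the composition $\psi\circ\varphi$ is a scalar; tracing both constructions through the Koszul resolutions identifies this scalar with the Serre duality pairing of $\xi$ with its dual, hence it is nonzero. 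Thus $\Omega^{i}_{{\Bbb P}^n}(-\ell_0-i)$ is a direct summand of $E$, the complementary summand inherits the Buchsbaum property, and induction on the rank of $E$ completes the proof.

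The principal obstacle is the splitting step: showing that $\psi\circ\varphi$ is a nonzero scalar requires carefully tracking the class $\xi$ through the iterated Koszul extensions on both sides and identifying the Yoneda product with the Serre duality pairing. Once this non-vanishing is in hand, everything else follows formally from the vanishing of the multiplication maps on intermediate cohomology guaranteed by the Buchsbaum hypothesis.
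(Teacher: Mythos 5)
The paper offers no proof of this proposition at all: it is quoted from Chang and Goto, and the arguments there (Goto's structure theorem for maximal Buchsbaum modules over a regular local ring) are substantially harder than a Horrocks-type splitting. Your proposal has a genuine gap at the step you pass over most quickly, namely the surjectivity of the edge map ${\rm Hom}(\Omega^i_{{\Bbb P}^n}(-\ell_0-i),E)\to {\rm H}^i(E(\ell_0))$. Only the \emph{first} connecting homomorphism in the Koszul chase is induced by the multiplication $V\otimes {\rm H}^q_*(E)\to {\rm H}^q_*(E)(1)$ (via ${\cal O}\to V^{\vee}\otimes{\cal O}(1)$, $1\mapsto\sum x_j^{\vee}\otimes x_j$), and is therefore killed by ${\frak m}{\rm H}^q_*(E)=0$. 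The later connecting maps are defined on ${\rm H}^*(E\otimes K_j)$ for the intermediate Koszul syzygy sheaves $K_j$; a class there need not lift to $\wedge^{j}V^{\vee}\otimes {\rm H}^*(E(\ell_0+j))$, and its image under the next Koszul differential is a secondary operation on ${\rm H}^*_*(E)$ which is \emph{not} controlled by the vanishing of the multiplication maps. Note that your argument never uses the restrictions $E|_L$ appearing in the paper's definition of Buchsbaum: you are in effect deducing the structure theorem from the quasi-Buchsbaum condition ${\frak m}{\rm H}^q_*(E)=0$ alone, and that implication is false.

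A concrete counterexample is the null correlation bundle $N$ on ${\Bbb P}^3$: here ${\rm H}^1_*(N)$ and ${\rm H}^2_*(N)$ are each one-dimensional, concentrated in a single degree, so every multiplication map you invoke is zero; yet $N$ is indecomposable of rank $2$ and is not a twist of any $\Omega^i_{{\Bbb P}^3}$ (the only such bundles of rank $\le 2$ are line bundles, and $N$ is not ACM). Run your construction on the generator of ${\rm H}^1(N(-1))$: the $\varphi$-side edge map is surjective, but if the dual-side edge map ${\rm H}^0(N(1)\otimes\Omega^1_{{\Bbb P}^3})\to {\rm H}^2(N(-3))$ were also surjective, the commutative-diagram argument of Proposition~\ref{start} would split the rank-$3$ bundle $\Omega^1_{{\Bbb P}^3}(1)$ off the rank-$2$ bundle $N$. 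So that edge map fails to be surjective, precisely because the second connecting map meets the nonzero obstruction group ${\rm H}^1(N(-1))^{\oplus 6}$, which no multiplication map reaches. (Consistently, hypothesis (b) of Proposition~\ref{start} fails here since ${\rm H}^2(N(-3))\ne 0$.) Any correct proof must exploit the full arithmetic Buchsbaum hypothesis --- the behaviour of $E$ restricted to general linear subspaces --- as Goto and Chang do; the surjectivity of ${\rm Ext}^i(k,M)\to {\rm H}^i_{\frak m}(M)$ that your chase is trying to produce is essentially equivalent to the conclusion and cannot be obtained this way. By contrast, the step you flag as the principal obstacle, that $\psi\circ\varphi$ is a nonzero scalar, is handled exactly as in Proposition~\ref{start} and is not where the difficulty lies.
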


\begin{prop}[\cite{SV}, (I.3.10)]
\label{SV}
Let $E$  be a vector bundle on ${\Bbb P}^n$.
Let us define ${\frak S} = \{ (i,\ell) | 1 \le i \le n-1, \ell \in {\Bbb Z},
{\rm H}^i(E(\ell)) \ne 0 \}$. Suppose that ${\frak S}$
satisfies the following condition:
``For $(i,\ell), (j,m) \in {\frak S}$, if $i \ge j$, then $i + \ell + 1 \ne
j + m$''.
Then $E$ is Buchsbaum.
\end{prop}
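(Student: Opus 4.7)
The plan is to derive the Buchsbaum condition from a direct consequence of the hypothesis with $j=i$.

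First, specialize to $j=i$ and $m=\ell+1$: if both $(i,\ell)$ and $(i,\ell+1)$ lay in ${\frak S}$, one would need $i+\ell+1 \ne i+(\ell+1)$, which is absurd. Hence $H^i(E(\ell)) \ne 0$ forces $H^i(E(\ell+1))=0$, and so for every linear form $x$ the multiplication
\[
H^i(E(\ell)) \xrightarrow{\ \cdot\, x\ } H^i(E(\ell+1))
\]
has vanishing target and is itself zero. Therefore ${\frak m}\,{\rm H}^i_*(E)=0$ for $1 \le i \le n-1$, which handles the Buchsbaum condition in the case $L={\Bbb P}^n$.

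For a general $r$-plane $L$, I would induct on the codimension. Write $L$ as a hyperplane section of some $L'$ cut out by a linear form $y$. The inductive vanishing on $H^i_*(E|_{L'})$ collapses the long exact sequence of
\[
0 \to E|_{L'}(\ell-1) \xrightarrow{\ \cdot\, y\ } E|_{L'}(\ell) \to E|_L(\ell) \to 0
\]
into short exact sequences
\[
0 \to H^i(E|_{L'}(\ell)) \to H^i(E|_L(\ell)) \to H^{i+1}(E|_{L'}(\ell-1)) \to 0
\]
for $1 \le i \le \dim L - 1$. Applying multiplication by another linear form $z$ vertically and using the vanishings on the two outer columns, a snake-lemma chase reduces the desired vanishing of $\cdot\, z$ on the middle term to the vanishing of a secondary map
\[
H^{i+1}(E|_{L'}(\ell-1)) \longrightarrow H^i(E|_{L'}(\ell+1)).
\]

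The main obstacle is to verify this secondary vanishing, and it is here that the full strength of the hypothesis, not only its $j=i$ specialization, is needed. I would unwind the iterated hyperplane restrictions so that both terms become successive extensions of groups $H^{i+p}(E(\ell-p))$ sitting on the two consecutive anti-diagonals $i+\ell$ and $i+\ell+1$ in the $(i,\ell)$-plane, and apply the hypothesis to each pair drawn from these two anti-diagonals. The forbidden identity $i+\ell+1=j+m$ with $i \ge j$ is then precisely the relation satisfied by every such pair, so the hypothesis forces one member of each pair to vanish and drives the secondary map to zero. Verifying this combinatorial diagram chase at each codimension closes the induction and yields the Buchsbaum property in the sense of the definition.
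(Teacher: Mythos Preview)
The paper does not supply its own proof of this proposition; it is simply quoted from St\"uckrad--Vogel \cite{SV}, (I.3.10), so there is nothing in the paper to compare your argument against.

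On the merits of your attempt: the first step is correct and shows that $\mathfrak m\,{\rm H}^i_*(E)=0$ for $1\le i\le n-1$ (the quasi-Buchsbaum condition on $\mathbb P^n$ itself). Your inductive framework for passing to a hyperplane section is also set up correctly, and you rightly isolate the obstruction as the induced ``secondary'' map $H^{i+1}(E|_{L'}(\ell-1))\to H^{i}(E|_{L'}(\ell+1))$ coming from the snake lemma once the outer multiplications vanish.

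The gap is in your final paragraph. You correctly observe that after unwinding the restrictions the filtration subquotients of the source and target lie on the adjacent anti-diagonals $i+\ell$ and $i+\ell+1$, and that the hypothesis with $i\ge j$ constrains how nonvanishing can occur on adjacent anti-diagonals. But the secondary map is not a priori diagonal (or even triangular) with respect to these filtrations, so knowing that certain individual subquotients vanish does not by itself force the map to be zero; one still has to control the off-diagonal components or show the filtrations are respected in the needed direction. Your sentence ``Verifying this combinatorial diagram chase at each codimension closes the induction'' is exactly where the real work lies, and as written it is an assertion rather than an argument. The strategy may well be completable along the lines of the original proof in \cite{SV}, but your proposal stops short of carrying it out.
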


From these facts we have observed the relation between the Buchsbaum
property and the differential sheaves in (\ref{Goto-Chang})
and a cohomological characterization of Buchsbaum modules in
(\ref{SV}). Then we will give a straightforward proof of
a more or less known result which illustrates the relation
between the vanishings of the intermediate cohomologies
and the exterior products of differential sheaves. The following
is a starting point of our main result, Theorem~\ref{mainth}.

\begin{prop}
\label{start}
Let $E$  be a vector bundle on ${\Bbb P}^n$
with ${\rm H}^p(E) \ne 0$, where $1 \le p \le n-1$.
If a vector bundle $E$ has the following condition:
\begin{itemize}
\item[{\rm (a)}]
${\rm H}^i(E(p-i+1)) = 0$ for $1 \le i \le p$.
\item[{\rm (b)}]
${\rm H}^i(E(p-i-1)) = 0$ for $p \le i \le n - 1$,
\end{itemize}
\noindent
then $E$ contains
${\Omega}^p_{{\Bbb P}^n}$
as a direct summand.
\end{prop}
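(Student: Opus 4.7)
The plan is to adapt the Horrocks-style argument recalled in the introduction, with $\Omega^p_{{\Bbb P}^n}$ taking the role that ${\cal O}_{{\Bbb P}^n}$ played there.  Since $\mathrm{End}(\Omega^p_{{\Bbb P}^n}) = k$, it will suffice to construct morphisms $\varphi\colon \Omega^p \to E$ and $\psi\colon E \to \Omega^p$ with $\psi \circ \varphi \neq 0$:  the composite is then a nonzero scalar multiple of the identity, so $\varphi$ exhibits $\Omega^p$ as a direct summand of $E$.

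For $\varphi$, I will use the short exact sequences
\[
0 \to \Omega^j \to \wedge^j V \otimes {\cal O}(-j) \to \Omega^{j-1} \to 0
\qquad (j = 1, \ldots, p),
\]
extracted from the Koszul complex, where $V$ is $(n+1)$-dimensional and $\Omega^0 = {\cal O}$.  Iterating the boundary maps of the long exact sequences for $\mathrm{Hom}(-, E)$ gives a composition
\[
\mathrm{Hom}(\Omega^p, E) \to \mathrm{Ext}^1(\Omega^{p-1}, E) \to \cdots \to \mathrm{Ext}^p({\cal O}, E) = {\rm H}^p(E),
\]
whose $k$-th arrow is surjective as soon as ${\rm H}^k(E(p-k+1)) = 0$; these vanishings are precisely hypothesis~(a).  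A chosen nonzero $\xi \in {\rm H}^p(E)$ will therefore lift to some $\varphi$ whose action on cohomology sends a generator $\kappa$ of ${\rm H}^p(\Omega^p) \cong k$ to $\xi$.  Symmetrically, the sequences
\[
0 \to \Omega^{q+1} \to \wedge^{q+1} V \otimes {\cal O}(-q-1) \to \Omega^q \to 0
\qquad (q = p, \ldots, n-1),
\]
with $\Omega^n = \omega_{{\Bbb P}^n}$, combined with $\mathrm{Hom}(E, -)$ and Serre duality, turn hypothesis~(b) into a surjection $\mathrm{Hom}(E, \Omega^p) \twoheadrightarrow \mathrm{Ext}^{n-p}(E, \omega) \cong {\rm H}^p(E)^*$.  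I then pick $\psi$ whose image is a functional $\eta$ on ${\rm H}^p(E)$ with $\eta(\xi) \neq 0$.

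The remaining step is to check that $\psi \circ \varphi \neq 0$.  Each iterated boundary map is Yoneda multiplication with a canonical class drawn from the corresponding half of the Koszul complex, living in $\mathrm{Ext}^p({\cal O}, \Omega^p)$ and $\mathrm{Ext}^{n-p}(\Omega^p, \omega)$ respectively.  Compatibility of Yoneda products with the Serre duality pairing identifies $\eta(\xi)$ with a nonzero scalar multiple of $\psi_* \xi \in {\rm H}^p(\Omega^p) = k$, while $\psi_* \circ \varphi_*$ on ${\rm H}^p(\Omega^p)$ equals multiplication by the scalar $\lambda$ satisfying $\psi \circ \varphi = \lambda \cdot {\rm id}$.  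Hence $\lambda \neq 0$, and the splitting follows.  The compatibility step---essentially identifying the Yoneda product of the Koszul extension classes with the Serre duality generator on $\Omega^p$---is the main technical hurdle; everything else is routine bookkeeping with long exact sequences.
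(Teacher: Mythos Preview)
Your argument is correct and follows essentially the same route as the paper's proof: both use the two halves of the Koszul resolution of $\Omega^p$ together with hypotheses (a) and (b) to produce surjections $\mathrm{Hom}(\Omega^p,E)\twoheadrightarrow {\rm H}^p(E)$ and $\mathrm{Hom}(E,\Omega^p)\twoheadrightarrow {\rm H}^{n-p}(E^\vee(-n-1))\cong {\rm H}^p(E)^*$, lift a dual pair of nonzero classes, and then argue that the composite is a nonzero scalar via ${\rm End}(\Omega^p)=k$. The only cosmetic difference is that you phrase everything via iterated boundary maps in $\mathrm{Ext}$ and Yoneda products, whereas the paper works directly with ${\rm H}^0(E\otimes\Omega^{p\vee})$ and ${\rm H}^0(E^\vee\otimes\Omega^p)$ and packages your ``compatibility step'' as the commutative square
\[
\begin{array}{ccc}
{\rm H}^0(E\otimes\Omega^{p\vee})\otimes{\rm H}^0(E^\vee\otimes\Omega^p) & \to & {\rm H}^0(\Omega^{p\vee}\otimes\Omega^p)\cong{\rm H}^0({\cal O})\\
\downarrow & & \downarrow\\
{\rm H}^p(E)\otimes{\rm H}^{n-p}(E^\vee(-n-1)) & \to & {\rm H}^n({\cal O}(-n-1)),
\end{array}
\]
which is exactly the identification you describe.
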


\begin{proof}
By an exact sequence arising from the Koszul complex:
\[ 0 \to {\cal O}_{{\Bbb P}^n} \to
 {\cal O}_{{\Bbb P}^n}^{\oplus}(1)
\to \cdots \to  {\cal O}_{{\Bbb P}^n}^{\oplus}(p) \to
\Omega_{{\Bbb P}^n}^{p \vee} \to 0, \]
we have a surjective map $\varphi: {\rm H}^0(E \otimes
\Omega^{p \vee}_{{\Bbb P}^n}) \to {\rm H}^p(E)$ from the
assumption
${\rm H}^1(E(p)) = \cdots = {\rm H}^p(E(1)) = 0$.
By an exact sequence arising from the Koszul complex:
\[ 0 \to {\cal O}_{{\Bbb P}^n}(-n-1) \to
 {\cal O}_{{\Bbb P}^n}^{\oplus}(-n)
\to \cdots \to  {\cal O}_{{\Bbb P}^n}^{\oplus}(-p-1) \to
\Omega_{{\Bbb P}^n}^p \to 0, \]
we have a surjective map $\psi : {\rm H}^0(E^{\vee} \otimes
\Omega^p_{{\Bbb P}^n}) \to {\rm H}^{n-p}(E^{\vee}(-n-1))$ from
the assumption
${\rm H}^p(E(-1)) = \cdots = {\rm H}^{n-1}(E(p-n)) = 0$,
that is, ${\rm H}^1(E^{\vee}(-p-1)) = \cdots =
{\rm H}^{n-p}(E^{\vee}(-n)) = 0$.
\par

As in \cite{BM} we have a nonzero element $f \in
{\rm H}^0(E \otimes
\Omega^{p \vee}_{{\Bbb P}^n})$ such that
$\varphi(f) = s (\ne 0) \in
{\rm H}^p(E)$.
Let us take
an element $s^{*} \in {\rm H}^{n-p}(E^{\vee}
(-n-1))$ corresponding to $s \in
{\rm H}^{m}(E)$,
 there is a nonzero element $g \in
{\rm H}^0(E^{\vee} \otimes
\Omega^p_{{\Bbb P}^n})$ such that
$\psi(g) = s^{*} (\ne 0) \in
{\rm H}^{n-p}(E^{\vee}(-n-1))$.
Then $f$ and $g$ are regarded as elements
of ${\rm Hom}(\Omega^p_{{\Bbb P}^n}, E)$ and
${\rm Hom}(E,\Omega^p_{{\Bbb P}^n})$ respectively.
From a commutative diagram:
\[
\begin{array}{ccc}
{\rm H}^0(E \otimes \Omega^{p \vee}_{{\Bbb P}^n})
\otimes {\rm H}^0(E^{\vee} \otimes \Omega^p_{{\Bbb P}^n})
& \to  &
{\rm H}^0(\Omega^{p \vee}_{{\Bbb P}^n} \otimes \Omega^p_{{\Bbb P}^n})  \cong
 {\rm H}^{0}({\cal O}_{{\Bbb P}^n})
\\
\downarrow & &  \downarrow   \\
{\rm H}^p(E) \otimes {\rm H}^{n-p}(E^{\vee}(-n-1))
  &
\to  &
 {\rm H}^n({\cal O}_{{\Bbb P}^n}(-n-1) ),
\\
\end{array}
\]
\noindent
a natural map
${\displaystyle {\rm H}^0(E \otimes \Omega^{p \vee}_{{\Bbb P}^n})
\otimes {\rm H}^0(E^{\vee} \otimes \Omega^p_{{\Bbb P}^n})
 \to
 {\rm H}^{0}({\cal O}_{{\Bbb P}^n})}$ yields that
$g \circ f$ is an isomorphism, which
implies  $\Omega^p_{{\Bbb P}^n}$ is a direct summand of $E$.
\end{proof}

\section{Cohomological Criterion of Vector Bundles on Biprojective Space}

What condition is required for a vector
bundle $E$ on ${\Bbb P}^m \times {\Bbb P}^n$
 to have a direct summand of the form
${\Omega}^p_{{\Bbb P}^{m}} \boxtimes  {\Omega}^q_{{\Bbb P}^{n}}$?
Although the exterior products of differential
sheaves are the indecomposable Buchsbaum vector bundles on ${\Bbb P}^n$,
The Buchsbaum property of differential sheaves are more complicated on
${\Bbb P}^m \times {\Bbb P}^n$, see \cite{M2}. This section is devoted
to an answer of cohomological criteria of differential sheaves from
the viewpoint of (\ref{start}).
Compared with an important result of \cite[(4.11)]{CM}, our theorem obtained
from an elementary way concludes an isomorphism to just one bundle directly.
\par

\begin{theorem}
\label{mainth}
Let $E$ be a vector bundle on ${\Bbb P}^{m} \times {\Bbb P}^{n}$
with ${\rm H}^{p+q}(E) \ne 0$, where $1 \le p \le m-1$ and
$1 \le q \le n-1$.
If a vector bundle $E$ has the following condition:
\begin{itemize}
\item[{\rm (a)}]
${\rm H}^i(E(a,b)) = 0$ for $1 \le i \le p+q$, $0 \le
a \le p$, $0 \le b \le q$ with $i + a + b = p + q + 1$.
\item[{\rm (b)}]
${\rm H}^i(E(a,b)) = 0$ for $p+q \le i \le m+n-1$,
$p - m \le a \le 0$, $q - n \le b \le 0$ with
$i + a + b = p + q - 1$,
\end{itemize}
\noindent
then $E$ contains
${\Omega}^p_{{\Bbb P}^{m}}
\boxtimes  {\Omega}^q_{{\Bbb P}^{n}}$
as a direct summand.
\end{theorem}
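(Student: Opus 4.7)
The plan is to mirror Proposition~\ref{start}, replacing the single Koszul complex on ${\Bbb P}^{n}$ by the tensor product (on the product space) of two Koszul-type resolutions pulled back from each factor. Concretely, pulling back the resolution
\[ 0 \to {\cal O} \to {\cal O}(1)^{\oplus} \to \cdots \to {\cal O}(p)^{\oplus} \to (\Omega^{p}_{{\Bbb P}^{m}})^{\vee} \to 0 \]
from ${\Bbb P}^{m}$ and its analogue from ${\Bbb P}^{n}$, and tensoring (exactness is preserved since the terms are locally free), one obtains a resolution of $(\Omega^{p}_{{\Bbb P}^{m}})^{\vee}\boxtimes(\Omega^{q}_{{\Bbb P}^{n}})^{\vee}$ whose term in position $k$ is a direct sum of copies of ${\cal O}(a,b)$ with $a+b=p+q-k$, $0\le a\le p$, $0\le b\le q$. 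An analogous tensor of the resolutions $0\to{\cal O}(-m-1)\to\cdots\to{\cal O}(-p-1)^{\oplus}\to\Omega^{p}_{{\Bbb P}^{m}}\to 0$ and its counterpart gives a resolution of $\Omega^{p}_{{\Bbb P}^{m}}\boxtimes\Omega^{q}_{{\Bbb P}^{n}}$ by twists ${\cal O}(c,d)$ with $-m-1\le c\le -p-1$, $-n-1\le d\le -q-1$.

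The key step is the chase through the iterated short exact sequences into which each resolution splits. Tensoring the first resolution with $E$, hypothesis~(a) supplies exactly the vanishings required at each connecting morphism to produce a surjection
\[ \varphi : {\rm H}^{0}(E\otimes(\Omega^{p}_{{\Bbb P}^{m}})^{\vee}\boxtimes(\Omega^{q}_{{\Bbb P}^{n}})^{\vee})\twoheadrightarrow {\rm H}^{p+q}(E). \]
Symmetrically, tensoring the second resolution with $E^{\vee}$ and invoking Serre duality ${\rm H}^{i}(E^{\vee}(c,d))\cong {\rm H}^{m+n-i}(E(-m-1-c,-n-1-d))^{*}$, hypothesis~(b) translates into the vanishings needed for a surjection
\[ \psi : {\rm H}^{0}(E^{\vee}\otimes\Omega^{p}_{{\Bbb P}^{m}}\boxtimes\Omega^{q}_{{\Bbb P}^{n}})\twoheadrightarrow {\rm H}^{m+n-p-q}(E^{\vee}(-m-1,-n-1)). \]

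Once both surjections are in place, the argument closes in the same way as Proposition~\ref{start}. Since ${\rm H}^{p+q}(E)\ne 0$, fix nonzero $s \in {\rm H}^{p+q}(E)$ and, by Serre duality, nonzero $s^{*}\in {\rm H}^{m+n-p-q}(E^{\vee}(-m-1,-n-1))$ with $\langle s^{*},s\rangle\ne 0$. Lifting via $\varphi,\psi$ gives $f,g$, viewed as morphisms $f:\Omega^{p}_{{\Bbb P}^{m}}\boxtimes\Omega^{q}_{{\Bbb P}^{n}}\to E$ and $g:E\to\Omega^{p}_{{\Bbb P}^{m}}\boxtimes\Omega^{q}_{{\Bbb P}^{n}}$. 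The commutative square whose top row is the trace $V^{\vee}\otimes V\to{\cal O}$ for $V=\Omega^{p}_{{\Bbb P}^{m}}\boxtimes\Omega^{q}_{{\Bbb P}^{n}}$ and whose bottom row is the Serre pairing into ${\rm H}^{m+n}({\cal O}(-m-1,-n-1))$ identifies ${\rm tr}(g\circ f)$ with $\langle s^{*},s\rangle\ne 0$. A Künneth computation, together with simplicity of each $\Omega^{p}_{{\Bbb P}^{m}}$ and $\Omega^{q}_{{\Bbb P}^{n}}$, gives ${\rm End}(V)=k$, so $g\circ f$ is a nonzero scalar, hence an isomorphism, exhibiting $V$ as a direct summand of $E$.

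The main obstacle will be verifying that hypotheses~(a) and~(b) match exactly the vanishings that the double Koszul chase requires. In the dual step this is not entirely transparent: the change of variables $i\mapsto m+n-i$, $c\mapsto -m-1-c$, $d\mapsto -n-1-d$ induced by Serre duality sends the Koszul grading relation into the linear constraint $i+a+b=p+q-1$ and the twist ranges into $p-m\le a\le 0$, $q-n\le b\le 0$, reproducing (b) on the nose. Checking this bookkeeping, and checking that the tensor of two locally free resolutions on the product is indeed exact, are the only nontrivial points beyond the structural outline above.
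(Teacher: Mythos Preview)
Your proposal is correct and follows the same overall strategy as the paper---build surjections $\varphi$ and $\psi$ from Koszul-type resolutions, then pair via Serre duality to see that $g\circ f$ is a nonzero scalar in ${\rm End}(\Omega^{p}_{{\Bbb P}^{m}}\boxtimes\Omega^{q}_{{\Bbb P}^{n}})\cong k$---but the resolutions you use are organized differently. You take the \emph{total complex} of the tensor product of the two pulled-back Koszul resolutions, so that the $k$-th term is a direct sum of twists ${\cal O}(a,b)$ with $a+b=p+q-k$; the cohomology chase then consumes hypothesis~(a) one diagonal at a time. The paper instead \emph{splices} the two resolutions sequentially: it first runs the ${\Bbb P}^{n}$-resolution tensored with $E$ (using the $a=0$ part of~(a)), then the ${\Bbb P}^{m}$-resolution tensored with $E\otimes p_{2}^{*}\Omega^{q\vee}_{{\Bbb P}^{n}}$, which forces a secondary Koszul argument to kill the groups ${\rm H}^{i}(E(p-i+1,0)\otimes p_{2}^{*}\Omega^{n-q}_{{\Bbb P}^{n}}(n+1))$ and thereby absorbs the remaining cases of~(a). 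Both routes end up using exactly the $(p+1)(q+1)-1$ vanishings in~(a) (and symmetrically in~(b)); your total-complex version is more symmetric and avoids the intermediate appearance of $\Omega^{n-q}_{{\Bbb P}^{n}}$, while the paper's spliced version stays closer to the one-variable template of Proposition~\ref{start} and makes the role of each factor explicit.
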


\begin{proof}
Let us consider the exact sequences arising from the Koszul complexes
\begin{equation}\label{e1}0 \to
\Omega_{{\Bbb P}^{m}}^p \to {\cal O}_{{\Bbb P}^{m}}^{f_p}(-p)
\to  {\cal O}_{{\Bbb P}^{m}}^{f_{p-1}}(-p+1) \to \cdots
\to  {\cal O}_{{\Bbb P}^{m}}^{f_1}(-1) \to {\cal O}_{{\Bbb P}^{m}}
\to 0
\end{equation}
and
\begin{equation}\label{e2}
0 \to \Omega_{{\Bbb P}^{n}}^q \to {\cal O}_{{\Bbb P}^{n}}^{e_q}(-q)
\to  {\cal O}_{{\Bbb P}^{n}}^{e_{q-1}}(-q+1) \to \cdots
\to  {\cal O}_{{\Bbb P}^{n}}^{e_1}(-1) \to {\cal O}_{{\Bbb P}^{n}}
\to 0,
\end{equation}
where ${\displaystyle f_i = \left(
\begin{array}{c}
m \\
i
\end{array}
\right)}$ and ${\displaystyle e_j = \left(
\begin{array}{c}
n \\
j
\end{array}
\right)}$. By gluing the pull back by
$p_2 : {\Bbb P}^m \times {\Bbb P}^n \to {\Bbb P}^n$
of the dual of (\ref{e2})  tensored by $E$ and the pull back by
$p_1 : {\Bbb P}^m \times {\Bbb P}^n \to {\Bbb P}^m$
 of the dual of (\ref{e1}) tensored by $E\otimes p_2^*
\Omega_{{\Bbb P}^{n}}^{q \vee}$
we obtain an exact sequence
\[ 0 \to E \to E(0,1)^{e_1} \to E(0,2)^{e_2} \to \cdots
\to E(0,q)^{e_q} \to E(1,0)^{f_1} \otimes p_2^*
\Omega_{{\Bbb P}^{n}}^{q \vee}\]
\[ \to E(2,0)^{f_2} \otimes p_2^* \Omega_{{\Bbb P}^{n}}^{q \vee}
\to \cdots \to E(p,0)^{f_p}
\otimes p_2^* \Omega_{{\Bbb P}^{n}}^{q \vee}
\to E \otimes
p_1^* \Omega_{{\Bbb P}^{m}}^{p \vee}
\otimes p_2^* \Omega_{{\Bbb P}^{n}}^{q \vee} \to 0. \]
Notice that \[\Omega_{{\Bbb P}^{n}}^{q \vee}.
\cong
\Omega_{{\Bbb P}^{n}}^{n-q }(n+1)\]
In order to have a surjective map
\[ \varphi: {\rm H}^0(E \otimes
p_1^* \Omega_{{\Bbb P}^{m}}^{p \vee}
\otimes p_2^* \Omega_{{\Bbb P}^{n}}^{q \vee})
 \to {\rm H}^{p+q}(E), \]
we will show
\begin{itemize}
\item[(c.1)]
${\rm H}^1(E(p,n+1) \otimes p_2^* \Omega_{{\Bbb P}^{n}}^{n-q})
= \cdots =
{\rm H}^p(E(1,n+1) \otimes p_2^* \Omega_{{\Bbb P}^{n}}^{n-q})
= 0$.
\item[(c.2)]
${\rm H}^{p+1}(E(0, q)) = \cdots = {\rm H}^{p+q}(E(0,1)) = 0$.
\end{itemize}
The asserion (c.2) follows from the assumption (a).
Since ${\rm H}^i(E(p-i+1,q)) = {\rm H}^{i+1}(E(p-i+1,q-1)) = \cdots
= {\rm H}^{i+q}(E(p-i+1, 0)) = 0$, $i = 1, \cdots, p$,
we see that ${\rm H}^i(E(p-i+1,n+1) \otimes
p_2^* \Omega_{{\Bbb P}^{n}}^{n-q}) = 0$
from the exact sequence
\[ 0 \to {\cal O}_{{\Bbb P}^{n}} \to
 {\cal O}_{{\Bbb P}^{n}}^{e_{n}}(1)
\to \cdots \to  {\cal O}_{{\Bbb P}^{n}}^{e_{n-q+1}}(q) \to
\Omega_{{\Bbb P}^{n}}^{n-q}(n+1) \to 0 \]
\noindent
by pulling back to $p_2$ and tensored by $E(p-i+1,0)$.
Thus we obtain (c.1).
\par
\medskip

Next, let us consider the exact sequences arising from the Koszul complexes
\begin{equation}\label{e3} 0 \to {\cal O}_{{\Bbb P}^{m}}(-m-1) \to
 {\cal O}_{{\Bbb P}^{m}}^{f_{m}}(-m)
\to \cdots \to  {\cal O}_{{\Bbb P}^{m}}^{f_{p+1}}(-p-1) \to
\Omega_{{\Bbb P}^{m}}^p \to 0 \end{equation}
and
\begin{equation}\label{e4} 0 \to {\cal O}_{{\Bbb P}^{n}}(-n-1) \to
 {\cal O}_{{\Bbb P}^{n}}^{e_{n}}(-n)
\to \cdots \to  {\cal O}_{{\Bbb P}^{n}}^{e_{q+1}}(-q-1) \to
\Omega_{{\Bbb P}^{n}}^q \to 0 \end{equation}
By gluing the pull back by $p_2$ of  (\ref{e4})
tensored by $E^\vee(-m-1,0)$ and the pull back by $p_1$
of  (\ref{e3}) tensored by $E^{\vee}\otimes p_2^*
\Omega_{{\Bbb P}^{n}}^{q }$
we obtain an exact sequence
\[ 0 \to E^{\vee}(-m-1,-n-1) \to  E^{\vee }(-m-1,-n)^{e_{m}}
\to \cdots\] \[\cdots \to  E^{\vee }(-m-1,-q-1)^{e_{p+1}}
\to  E^{\vee }(-m,0)^{f_{m}} \otimes p_2^* \Omega_{{\Bbb P}^{n}}^q
\to \cdots\] \[ \cdots\to  E^{\vee }(-p-1,0)^{f_{p+1}}
\otimes p_2^* \Omega_{{\Bbb P}^{n}}^q
\to E^{\vee} \otimes
p_1^* \Omega_{{\Bbb P}^{m}}^p \otimes p_2^* \Omega_{{\Bbb P}^{n}}^q \to 0.
\]
In order to have a surjective map
\[ \psi:  {\rm H}^0(E^{\vee} \otimes p_1^* \Omega_{{\Bbb P}^{m}}^p
\otimes p_2^* \Omega_{{\Bbb P}^{n}}^q)
 \to {\rm H}^{m+n-p-q}(E^{\vee}(-m-1,-n-1)) \]
we will show that
${\rm H}^1(E^{\vee}(-p-1,0) \otimes p_2^* \Omega_{{\Bbb P}^{n}}^q)
= \cdots
= {\rm H}^{m - p}(E^{\vee}(-m,0) \otimes p_2^* \Omega_{{\Bbb P}^{n}}^q) = 0$
and
${\rm H}^{m-p+1}(E^{\vee}(-m-1,-q-1)) = \cdots =
{\rm H}^{m+n-p-q}(E^{\vee}(-m-1,-n) = 0$.
By Serre duality, we have only to show
\begin{itemize}
\item[(d.1)]
${\rm H}^{m+n-1}(E(p-m,0) \otimes p_2^* \Omega_{{\Bbb P}^{n}}^{n-q})
= \cdots
= {\rm H}^{n + p}(E(-1,0) \otimes p_2^* \Omega_{{\Bbb P}^{n}}^{n-q}) = 0$.
\item[(d.2)]
${\rm H}^{n+p-1}(E(0,-n+q)) = \cdots =
{\rm H}^{p+q}(E(0,-1)) = 0$.
\end{itemize}
The assertion (d.2) follows from the assumption (b).
Since ${\rm H}^i(E(n+p-i-1,-n+q)) = \cdots
= {\rm H}^{i-n+q}(E(n+p-i-1,0)) = 0$, $i = n+p, \cdots, m+n-1$,
from the assumption $(b)$,
we see that ${\rm H}^i(E(n+p-i-1,0) \otimes
 p_2^* \Omega_{{\Bbb P}^{n}}^{n-q}) = 0$
from the exact sequence
\[ 0 \to \Omega_{{\Bbb P}^{n}}^{n-q} \to
{\cal O}_{{\Bbb P}^{n}}^{e_{n-q}}(-n+q) \to \cdots \to
 {\cal O}_{{\Bbb P}^{n}}^{e_1}(-1)
\to  {\cal O}_{{\Bbb P}^{n}}
\to 0 \]
\noindent
by pulling back to $p_2$ and tensored by $E(n+p-i-1,0)$.
Thus we obtain (d.1).
\par
\medskip

As in (\ref{start}), for a nonzero element $s \in  {\rm H}^{p+q}(E)$
and the corresponding element $s^* \in {\rm H}^{m+n-p-q}(E^{\vee}(-m-1,-n-1))$
by Serre duality, there are $f \in {\rm H}^0(E \otimes
p_1^* \Omega_{{\Bbb P}^{m}}^{p \vee}
\otimes p_2^* \Omega_{{\Bbb P}^{n}}^{q \vee})$
and $g \in {\rm H}^0(E^{\vee} \otimes p_1^*
\Omega_{{\Bbb P}^{m}}^p
\otimes p_2^* \Omega_{{\Bbb P}^{n}}^q)$
with $\varphi(f) = s$ and $\psi(g) = s^*$.
Then $g \circ f$ is an isomorphism by regarding
as $f \in {\rm Hom}(p_1^* \Omega_{{\Bbb P}^{m}}^p
\otimes p_2^* \Omega_{{\Bbb P}^{n}}^q, E)$
and $g \in {\rm Hom}(E,
p_1^* \Omega_{{\Bbb P}^{m}}^p
\otimes p_2^* \Omega_{{\Bbb P}^{n}}^q)$, which
gives an inclusion from
$p_1^* {\Omega}^p_{{\Bbb P}^{m}}
\otimes p_2^* {\Omega}^q_{{\Bbb P}^{n}}$
to $E$ as a direct summand.
\end{proof}

\begin{ex}
\label{P2*P2}
Let us give an application of (\ref{mainth})
 to a vector bundle of ${\Bbb P}^2 \times
{\Bbb P}^2$.
Let $E$ be an indecomposable vector bundle on
${\Bbb P}^2 \times {\Bbb P}^2$. Then the following
conditions are equivalent: \par
\begin{enumerate}
\item[(a)] $E \cong \Omega_{{\Bbb P}^2} \boxtimes
\Omega_{{\Bbb P}^2}$.
\item[(b)]
${\rm H}^2(E) \ne 0$ and
${\rm H}^1(E(1,1)) = {\rm H}^2(E(0,1)) = {\rm H}^2(E(1,0))
= {\rm H}^2(E(-1,0)) = {\rm H}^2(E(0,-1)) = {\rm H}^3(E(-1,-1))
= 0$.
\end{enumerate}

\end{ex}

\end{document}